\makeatletter \@addtoreset{equation}{section} \makeatother
\renewcommand\thetable{\thesection.\@arabic\c@table}
\theoremstyle{plain}
\newtheorem{maintheorem}{Theorem}
\newtheorem{theorem}{Theorem }[section]
\newtheorem{proposition}[theorem]{Proposition}
\newtheorem{lemma}[theorem]{Lemma}
\theoremstyle{definition} \theoremstyle{remark}
\newtheorem{remark}[theorem]{Remark}
\DeclareMathAlphabet{\mathpzc}{OT1}{pzc}{m}{it}
\newcommand{\SL}{\text{SL}}
\renewcommand{\epsilon}{\varepsilon}
\begin{document}
\large

\title[A note on reversibility and Pell equations]{A note on reversibility and Pell equations}

\author[M. Bessa]{M\'ario Bessa}
\address{M\'ario Bessa\\ Departamento de Matem\'atica, Universidade da Beira Interior\\
Rua Marqu\^es d'\'Avila e Bolama, 6201-001 Covilh\~a, Portugal}
\email{bessa@ubi.pt}

\author[M. Carvalho]{Maria Carvalho}
\address{Maria Carvalho \\ Centro de Matem\'atica da Universidade do Porto,
Rua do Campo Alegre, 687, 4169-007 Porto, Portugal}
\email{mpcarval@fc.up.pt}

\author[A. Rodrigues]{Alexandre Rodrigues}
\address{Alexandre Rodrigues \\ Centro de Matem\'atica da Universidade do Porto \\
Rua do Campo Alegre, 687, 4169-007 Porto, Portugal}
\email{alexandre.rodrigues@fc.up.pt}

\date{\today}

\maketitle

\begin{abstract} We consider hyperbolic toral automorphisms which are reversible with respect to a linear area-preserving involution. We will prove that within this context reversibility is linked to a generalized Pell equation whose solutions we will analyze. Additionally, we will verify to what extent reversibility is a common feature and characterize the generic setting.
\end{abstract}

\medskip
{\small \noindent\emph{MSC2010:} primary 37D25, 37C80; secondary 37C05.\\
\emph{keywords:} Hyperbolic toral automorphisms; Reversibility; Generalized Pell equations.}

\section{Introduction}\label{se.intro}

Let $M$ be a compact, connected, smooth, Riemannian two-dimensional manifold without boundary and $\mu$ its normalized area. Denote by $\text{Diff}_{\mu}^{~1}(M)$ the set of all area-preserving $C^1$-diffeomorphisms of $M$ endowed with the $C^1$-topology. A diffeomorphism $f:M \rightarrow M$ is said to be Anosov if $M$ is a hyperbolic set for $f$, that is, if the tangent bundle of $M$ admits a splitting $E^s\oplus E^u$ such that there exist an adapted norm $\|\,\,\|$ and a constant $\sigma\in(0,1)$ satisfying
$$\|Df_x(v)\|\leq \sigma \quad \text{ and } \quad \|Df_x^{-1}(u)\|\leq \sigma$$
for every $x \in M$ and any unitary vectors $v\in E^s_x$ and $u\in E^u_x$. It is known, after \cite{F2, Man}, that among the $2$-dimensional manifolds only the torus (we will denote by $\mathbb{T}^2$) may support this type of diffeomorphisms. Moreover, each Anosov diffeomorphism on $\mathbb{T}^2$ is topologically conjugate to a linear model, that is, to a diffeomorphism induced on the torus by an element of the linear group $\SL(2,\mathbb{Z})$ of the $2\times2$ matrices with integer entries, determinant equal to $\pm 1$ and whose eigenvalues do not belong to the unit circle. In this note, we will consider linear Anosov diffeomorphisms in $\text{Diff}_{\mu}^{~1}(\mathbb{T}^2)$ which exhibit some symmetry. More precisely, let $R:\mathbb{T}^2\rightarrow \mathbb{T}^2$ be a diffeomorphism such that $R\circ R$ is the Identity map of $\mathbb{T}^2$ (such an $R$ is called an involution) and denote by $\text{Diff}^{~1}_{\mu, R}(\mathbb{T}^2)$ the subset of maps $f \in \text{Diff}_{\mu}^{~1}(\mathbb{T}^2)$, called $R$-reversible, such that $R$ conjugates $f$ and $f^{-1}$, that is,
$$R\circ f=f^{-1}\circ R.$$

Reversibility plays a fundamental role in physics and dynamical systems. In view of the many applications, the references \cite{LR} and \cite{RQ} present a thorough survey on reversible systems. Concerning this subject, important work was done in \cite{QC89} and \cite{BR}. In the former, the authors derive necessary conditions for local reversibility within mappings of the plane with a symmetric fixed point, expressing them through the quadratic and cubic coefficients of the Taylor expansion about the fixed point. This way, they establish an efficient negative criteria to show that a mapping is not reversible. On the other hand, on Section 2.2 of \cite{BR} we may read a detailed characterization of the kind of groups that are admissible as groups of reversible symmetries associated with unimodular matrices. Using algebraic techniques and results, the authors unveiled the possible structure of these groups, showing that it is completely resolvable when the matrices belong to $Gl(2,\mathbb{Z})$ or $PG(2,\mathbb{Z})$, as summarized in \cite[Theorem 3]{BR}.

Our work has followed a different, not purely algebraic approach and addressed other questions, thus complementing the information disclosed by the two previous references. First, we will present a simple method to construct a (non unique) Anosov diffeomorphism that anti-commutes with a given involution (Section~\ref{se.proof-main-teo}). Conversely, after concluding that an automorphism that reverses orientation is not reversible, we will establish a connection between the existence of reversible symmetries for a hyperbolic toral automorphism and the set of solutions of a generalized Pell equation (Section~\ref{se.Anosov}). Moreover, we will make clear in Section~\ref{se.generic} why generically, in the $C^1$ topology, the $r$-centralizer of a toral Anosov diffeomorphism is trivial, summoning a similar result obtained in \cite{BCW} within the conservative context for the centralizer. We believe that an extension of these results may be proved for reversing symmetries that are not involutions, and on higher dimensions. Yet, those are so far open problems.

\section{Overview}

Whereas it is often useful to check if a certain dynamical system is reversible under a given involution, we may also be interested in ascertaining if there is an involution under which a given dynamics is reversible, and to what extent this is a common feature. The latter query is hard to answer in general, and that is why we will confine our study to hyperbolic toral automorphisms, benefiting both from the linear structure and the low dimensional setting. In what follows we will address three questions. \\

\textbf{$Q_1$}. \emph{Given a linear involution $R\in\text{Diff}_{\mu}^{~1}(\mathbb{T}^2)$, is there a $R$-reversible linear (area-preserving) Anosov diffeomorphism $f$?} \\

The answer is obvious (and no) if $R = \pm Id$, the so called trivial cases. Concerning the other possible involutions, we will look for a linear Anosov diffeomorphism $f$ whose derivative at any point of $\mathbb{T}^2$ is a fixed linear map with matrix $L\in \SL(2,\mathbb{Z})$. To simplify our task, we will also assume that $det\,(L)=1$. Observe that, as $R$ is induced by a matrix $A \in \SL(2,\mathbb{Z})$ as well, if we lift the equality $R\circ f=f^{-1}\circ R$ by differentiating it at any point of $\mathbb{T}^2$, we obtain $A\circ L=L^{-1}\circ A.$ Analyzing the entries of these matrices we will answer positively to question $Q_1$.

\begin{proposition}\label{teo.reversing-linear-Anosov}
Let $\mathcal{L}$ be the set of linear Anosov diffeomorphisms on $\mathbb{T}^2$. If $R\in\text{Diff}_{\mu}^{~1}(\mathbb{T}^2)\setminus \{\pm \, Id\}$ is a linear involution, then
$$\text{Diff}^{~1}_{\mu, R}(\mathbb{T}^2) \cap \mathcal{L} \neq \emptyset.$$
\end{proposition}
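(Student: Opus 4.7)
The proof starts by translating the functional equation $R \circ f = f^{-1} \circ R$ into an algebraic identity. Since both $R$ and $f$ lift to the integer matrices $A$ and $L$ respectively, differentiation gives $AL = L^{-1}A$, which, using $A^2 = I$, is equivalent to $ALA = L^{-1}$. The first step is to classify the non-trivial linear involutions. From $A \neq \pm I$ together with $A^2 = I$, the minimal polynomial of $A$ is $x^2 - 1$, so its eigenvalues are $\{+1, -1\}$; hence $\mathrm{tr}(A) = 0$ and $\det A = -1$, and $A$ has the shape
$$A = \begin{pmatrix} p & q \\ r & -p \end{pmatrix}, \qquad p^2 + qr = 1, \quad p, q, r \in \mathbb{Z}.$$

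The key algebraic observation is that $L$ solves $ALA = L^{-1}$ if and only if $L$ factors as a product of two involutions $L = AB$. Indeed, setting $B := AL$ we see $B^2 = ALAL = L^{-1}L = I$; conversely, if $A^2 = B^2 = I$ then $A(AB)A = BA = (AB)^{-1}$. Hence the task reduces to exhibiting a second involution $B \in \SL(2,\mathbb{Z})$ with $\det B = -1$ (so that $\det L = 1$, as required) for which the product $L = AB$ is Anosov, i.e.\ $|\mathrm{tr}(AB)| > 2$.

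To see that such a $B$ always exists, consider the family
$$B_n := \begin{pmatrix} n & 1 \\ 1 - n^2 & -n \end{pmatrix}, \qquad n \in \mathbb{Z},$$
which, by the classification above, is an involution in $\SL(2,\mathbb{Z})$ for every $n$. A direct computation gives $\mathrm{tr}(AB_n) = -qn^2 + 2pn + (q + r)$. When $q \neq 0$ this is a non-constant quadratic in $n$, so $|\mathrm{tr}(AB_n)| > 2$ for all sufficiently large $|n|$. When $q = 0$, the constraint $p^2 + qr = 1$ forces $p = \pm 1$ and the trace reduces to $\pm 2n + r$, which is again unbounded in $n$. In either alternative, a suitable $n$ produces an area-preserving Anosov matrix $L = AB_n$, whose induced diffeomorphism lies in $\mathrm{Diff}^{1}_{\mu, R}(\mathbb{T}^2) \cap \mathcal{L}$.

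The main obstacle is conceptual, not computational: recognising that the reversibility relation $ALA = L^{-1}$ is precisely the statement that $L$ splits as a product of two involutions. Attacking the entries of $L = \begin{pmatrix} a & b \\ c & d \end{pmatrix}$ head on produces the coupled system $p(a-d) + qc + br = 0$ and $ad - bc = 1$ with the Anosov inequality $|a+d| > 2$; this is solvable, but obscures the underlying structure, whereas the factorisation through $B$ makes the construction uniform and transparent.
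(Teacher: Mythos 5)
Your proof is correct, and it takes a genuinely different route from the paper's. The paper first classifies the non-trivial involutions of $\SL(2,\mathbb{Z})$ into several explicit shapes (Lemma~\ref{prop.classification}) and then, case by case, solves the linear conditions that $A\circ L=L^{-1}\circ A$ imposes on the entries $a,b,c,d$ of $L$, exhibiting a concrete hyperbolic solution for each type of $A$ (e.g.\ $b\gamma=d-a$ in the first case, and the choice $a=d=\alpha$, $b=\pm\beta$ in the third). You instead exploit the structural fact that, for an involution $A$, the relation $ALA=L^{-1}$ holds precisely when $B:=AL$ is itself an involution, so that reversible matrices are exactly the products of two involutions; combined with your uniform normal form $A=\left(\begin{smallmatrix} p & q \\ r & -p\end{smallmatrix}\right)$, $p^2+qr=1$ (which condenses the paper's three-case classification, since trace $0$ and determinant $-1$ are forced), this reduces the whole proposition to tuning one integer parameter $n$ in the family $B_n$ so that $|\mathrm{tr}(AB_n)|>2$, which your trace computation $-qn^2+2pn+(q+r)$ handles uniformly in both the $q\neq 0$ and $q=0$ alternatives. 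The paper's approach buys explicit reversible Anosov matrices for each involution type, which it then reuses in Section~\ref{se.Anosov} when analysing question $Q_2$ and the associated Pell equations; your approach buys a shorter, case-free existence argument and makes visible the classical principle that reversibility is equivalent to factorisation into two involutions, at the cost of producing examples that are less immediately legible entry by entry. All the individual steps check out: $\det B_n=-1$ and $\mathrm{tr}\,B_n=0$ give $B_n^2=I$ by Cayley--Hamilton, $\det(AB_n)=1$ ensures orientation- and area-preservation, and $|\mathrm{tr}|>2$ with determinant $1$ excludes eigenvalues on the unit circle.
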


The proof of this result will be presented on Section~\ref{se.proof-main-teo}. The argument is straightforward once the list of involutions in $\SL(2, Z)$ is determined. We observe that, in \cite[Lemma 4]{BR}, the authors work with involutions up to conjugacy to deduce a similar characterization.\\

Afterwards we will deal with the converse query.\\

\textbf{$Q_2$}. \emph{Given a linear Anosov diffeomorphism $f\in \text{Diff}_{\mu}^{~1}(\mathbb{T}^2)$, is there a linear involution $R\in\text{Diff}_{\mu}^{~1}(\mathbb{T}^2)$ such that $f$ is $R$-reversible?}\\

The answer depends essentially on whether $f$ preserves or reverses orientation. Indeed, as will be shown in Subsection~\ref{orientation-reversing}, if $f\in \text{Diff}_{\mu}^{~1}(\mathbb{T}^2)$ reverses orientation, meaning that $f$ is induced by a linear transformation $L\in \SL(2,\mathbb{Z})$ such that $det\,(L)=-1$, then $f$ is never $R$-reversible, for every linear involution $R\in \text{Diff}_{\mu}^{~1}(\mathbb{T}^2).$ We notice that this property may also be inferred from \cite[Lemma 6]{BR}.

Otherwise, if $f\in \text{Diff}_{\mu}^{~1}(\mathbb{T}^2)$ is an orientation-preserving diffeomorphism, then the reversibility of $f$ with respect to a linear involution $R$ determines a generalized Pell equation (see \cite{C, Mo} for this concept) whose solutions we will analyze on Section~\ref{se.Anosov}.

\begin{maintheorem}\label{teo.orientation-preserving}
Let $f\in \text{Diff}_{\mu}^{~1}(\mathbb{T}^2)$ be a linear Anosov diffeomorphism induced by a transformation
$$L=\begin{pmatrix} a & b \\ c & d \end{pmatrix}\,\,\in\,\, \SL(2,\mathbb{Z})$$
such that $det\,(L)=1$. Then:\\

\begin{itemize}
\item[(i)] $b$ divides $a-d$ if and only if $f$ is $R$-reversible, where $R$ is the projection on $\mathbb{T}^2$ of either
$$A=\begin{pmatrix} 1 & 0 \\ \frac{d-a}{b} & -1 \end{pmatrix} \quad \text{ or } \quad A=\begin{pmatrix} -1 & 0 \\ \frac{a-d}{b} & 1 \end{pmatrix}.$$

\item[(ii)] $c$ divides $a-d$ if and only if $f$ is $R$-reversible, where $R$ is the projection on $\mathbb{T}^2$ of either
$$A=\begin{pmatrix} 1 & \frac{d-a}{c} \\ 0 & -1 \end{pmatrix}\quad \text{ or } \quad A=\begin{pmatrix} -1 & \frac{a-d}{c} \\ 0 & 1 \end{pmatrix}.$$

\item[(iii)] Given $\alpha, \beta \in \mathbb{Z}\setminus\{0\}$ such that $1-\alpha^2 \neq 0$ and $\beta$ divides $1-\alpha^2$, consider the involution $R$ obtained by projecting on $\mathbb{T}^2$ the matrix
 $$A=\begin{pmatrix} \alpha & \beta \\ \frac{1-\alpha^2}{\beta} & -\alpha \end{pmatrix}\,\,\in\,\, \SL(2,\mathbb{Z}).$$
Then $f$ is $R$-reversible if and only if the generalized Pell equation $x^2-Dy^2=0$, where
$$D=(a+d)^2-4 \quad \text{ and } \quad N=4b^2,$$
has among its solutions
$$x=2b\alpha+(d-a)\beta \quad \text{ and } \quad y=\beta.$$
\end{itemize}
\end{maintheorem}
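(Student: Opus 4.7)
The plan is to lift reversibility to a matrix equation on the universal cover, classify the non-trivial involutions in $\SL(2,\mathbb{Z})$, and then observe that the resulting linear relation splits into exactly the three cases of the statement.

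First I would use that, since $f$ and $R$ are both linear, their origin-preserving lifts to $\mathbb{R}^{2}$ are unique; hence the reversibility relation $R\circ f=f^{-1}\circ R$ on $\mathbb{T}^{2}$ is equivalent to the identity of integer matrices $AL=L^{-1}A$, with $L^{-1}=\bigl(\begin{smallmatrix} d & -b \\ -c & a\end{smallmatrix}\bigr)$. Next I would enumerate the admissible $A$: the condition $A^{2}=I$ with $A\neq\pm I$ forces $\mathrm{tr}(A)=0$ and $\det(A)=-1$, so
$$A=\begin{pmatrix} \alpha & \beta \\ \gamma & -\alpha \end{pmatrix}, \qquad \alpha^{2}+\beta\gamma=1.$$
The three items of the theorem correspond to the three natural sub-families of such $A$: namely $(\alpha,\beta)=(\pm 1,0)$, $(\alpha,\gamma)=(\pm 1,0)$, and finally $\beta\gamma\neq 0$ with $\gamma=(1-\alpha^{2})/\beta$.

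Multiplying out $AL$ and $L^{-1}A$, the $(1,2)$ and $(2,1)$ entries cancel identically, while the $(1,1)$ and $(2,2)$ entries both collapse to a single \emph{master equation}
$$(a-d)\alpha+b\gamma+c\beta=0. \qquad (\star)$$
Thus $R$-reversibility is equivalent to $(\star)$ coupled with $\alpha^{2}+\beta\gamma=1$. A preliminary observation is that hyperbolicity of $L$ precludes $b=0$ or $c=0$, since a triangular element of $\SL(2,\mathbb{Z})$ with determinant $1$ has diagonal in $\{\pm 1\}$; this allows free division by $b$, $c$ and $\beta$ below. For case (i), specializing to $\alpha=\pm 1$, $\beta=0$ reduces $(\star)$ to $\gamma=\mp(a-d)/b$, an integer precisely when $b\mid a-d$, and the two sign choices give the two displayed matrices. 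Case (ii) is obtained by the symmetric choice $\alpha=\pm 1$, $\gamma=0$.

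For case (iii), substituting $\gamma=(1-\alpha^{2})/\beta$ into $(\star)$ and clearing denominators yields
$$b\alpha^{2}+(d-a)\alpha\beta-c\beta^{2}-b=0.$$
Using $bc=ad-1$ together with $(d-a)^{2}-(a+d)^{2}=-4ad$, a direct expansion of $x=2b\alpha+(d-a)\beta$, $y=\beta$, $D=(a+d)^{2}-4$ produces the identity
$$x^{2}-Dy^{2}-4b^{2}\;=\;4b\bigl[\,b\alpha^{2}+(d-a)\alpha\beta-c\beta^{2}-b\,\bigr],$$
so, since $b\neq 0$, $(\star)$ is equivalent to the generalized Pell relation $x^{2}-Dy^{2}=N$ with $N=4b^{2}$ at the stated $(x,y)$. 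The only non-routine step is this algebraic identity and the accompanying sign check; the rest is bookkeeping and verification that the three families above exhaust all non-trivial involutions in $\SL(2,\mathbb{Z})$.
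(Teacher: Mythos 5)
Your proposal is correct and follows essentially the same route as the paper: lift reversibility to $AL=L^{-1}A$, classify the nontrivial involutions of $\SL(2,\mathbb{Z})$ (the paper's Lemma~\ref{prop.classification}), and reduce each family to the single linear condition $(a-d)\alpha+b\gamma+c\beta=0$, which in the third family becomes the conic $b\alpha^{2}+(d-a)\alpha\beta-c\beta^{2}=b$ and, under the stated change of variables, the Pell equation $x^{2}-Dy^{2}=4b^{2}$. Your two additions --- the remark that hyperbolicity forces $b\neq 0$ and $c\neq 0$ (which legitimizes the divisions in items (i) and (ii)), and the explicit identity $x^{2}-Dy^{2}-4b^{2}=4b\bigl[b\alpha^{2}+(d-a)\alpha\beta-c\beta^{2}-b\bigr]$ --- are correct and in fact slightly more careful than the paper's own presentation.
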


We remark that, if $f,g\in\text{Diff}^{~1}_{\mu, R}(\mathbb{T}^2)$, then $R\circ f^{-1}=f \circ R$ and $R\circ g^{-1}=g \circ R$ but
$$R\circ (f\circ g)=(f^{-1}\circ R)\circ g = (f^{-1}\circ g^{-1}) \circ R =(g \circ f)^{-1} \circ R$$
which means that the set $\text{Diff}^{~1}_{\mu,R}(\mathbb{T}^2)$, endowed with the composition of maps is, in general, not a group. Yet, $\text{Diff}^{~1}(\mathbb{T}^2)$, $\text{Diff}^{~1}_{\mu}(\mathbb{T}^2)$ and $\text{Diff}^{~1}_{\mu, R}(\mathbb{T}^2)$ are Baire spaces (see \cite{Devaney76} and Subsection~\ref{se.fixed points} for details), so it is natural to adopt another perspective concerning reversibility.\\

\textbf{$Q_3$}. \emph{What is the generic case?}\\

We will verify on Section~\ref{se.generic} that, for any diffeomorphism $f$ of a residual subset of $\text{Diff}^{~1}_{\mu}(\mathbb{T}^2)$, the equation $R\circ f=f^{-1}\circ R$ has only trivial solutions. Given a diffeomorphim $f:\mathbb{T}^2 \rightarrow \mathbb{T}^2$, the $r$-centralizer of $f$ is the set
$$\mathcal{Z}_r(f)=\{R \in \text{Diff}^{~1}(\mathbb{T}^2): R \circ f = f^{-1} \circ R\}.$$
$\mathcal{Z}_r(f)$ is said to be trivial if it is either empty or reduces to a set $\{R\circ f^n: n\in\mathbb{Z}\}$ for some $R \in \text{Diff}^{~1}(\mathbb{T}^2)$.

\begin{maintheorem}\label{teo.generic}
$C^1$-generically in $\text{Diff}^{~1}_{\mu}(M)$, the $r$-centralizer is trivial.
\end{maintheorem}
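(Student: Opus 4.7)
The plan is to reduce the statement to the known genericity of trivial classical centralizers, by exhibiting a coset structure on $\mathcal{Z}_r(f)$ and then applying the conservative version of the Bonatti--Crovisier--Wilkinson theorem cited in the introduction.

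First I would establish the following algebraic fact: whenever $\mathcal{Z}_r(f) \neq \emptyset$ and $R_1, R_2 \in \mathcal{Z}_r(f)$, the composition $S = R_2^{-1} \circ R_1$ lies in the classical centralizer $\mathcal{Z}(f) = \{T \in \text{Diff}^{~1}(\mathbb{T}^2) : T \circ f = f \circ T\}$. Indeed, from $R_i \circ f = f^{-1} \circ R_i$ for $i=1,2$, one rearranges the relation for $R_2$ into $R_2^{-1} \circ f^{-1} = f \circ R_2^{-1}$ and then computes
$$S \circ f = R_2^{-1} \circ R_1 \circ f = R_2^{-1} \circ f^{-1} \circ R_1 = f \circ R_2^{-1} \circ R_1 = f \circ S.$$
Consequently, fixing any $R \in \mathcal{Z}_r(f)$ one obtains the coset description $\mathcal{Z}_r(f) = R \cdot \mathcal{Z}(f)$.

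Second, I would invoke the Bonatti--Crovisier--Wilkinson theorem \cite{BCW} in its conservative $C^1$ form: there exists a residual subset $\mathcal{R} \subset \text{Diff}^{~1}_{\mu}(\mathbb{T}^2)$ such that every $f \in \mathcal{R}$ satisfies $\mathcal{Z}(f) = \{f^n : n \in \mathbb{Z}\}$. Combined with the first step, for each $f \in \mathcal{R}$ either $\mathcal{Z}_r(f) = \emptyset$, or $\mathcal{Z}_r(f) = \{R \circ f^n : n \in \mathbb{Z}\}$ for some $R \in \text{Diff}^{~1}(\mathbb{T}^2)$. In both cases $\mathcal{Z}_r(f)$ is trivial in the sense defined just above the theorem, which ends the argument.

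The main obstacle is the precise form in which \cite{BCW} must be quoted: since a reversing symmetry $R$ is not \emph{a priori} area-preserving, the centralizer whose triviality is invoked in the second step has to be taken inside the full group $\text{Diff}^{~1}(\mathbb{T}^2)$ rather than inside $\text{Diff}^{~1}_{\mu}(\mathbb{T}^2)$. This requires either appealing directly to the version of \cite{BCW} where the centralizer is computed in the ambient $C^1$ group, or complementing the cited result with an auxiliary genericity argument forcing every $C^1$ centralizer element of a generic conservative $f$ to itself preserve $\mu$ (typically via the $C^1$-generic density of ergodic behaviour on $\mathbb{T}^2$, which pins down the invariant measure up to a finite indeterminacy that can then be absorbed into the coset). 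The remaining steps are purely formal.
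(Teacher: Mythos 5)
Your argument is correct and follows essentially the same route as the paper: both reduce the statement to the Bonatti--Crovisier--Wilkinson theorem by observing that two elements of $\mathcal{Z}_r(f)$ differ by an element of the classical centralizer (the paper composes two reversing symmetries to get $R\circ S\in\mathcal{Z}(f)$, you take $R_2^{-1}\circ R_1$; these are equivalent observations). Your closing caveat --- that one must check whether \cite{BCW} yields triviality of the centralizer inside all of $\text{Diff}^{~1}(\mathbb{T}^2)$ rather than only inside $\text{Diff}^{~1}_{\mu}(\mathbb{T}^2)$, since a reversing symmetry is not assumed area-preserving --- is a genuine subtlety that the paper's own proof passes over in silence.
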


We note that, in spite of the fact that, on the torus $\mathbb{T}^2$, each Anosov diffeomorphism is conjugate to a hyperbolic toral automorphism, the conclusions we have drawn cannot be extended to all Anosov diffeomorphisms because the $R$-reversibility, for a fixed $R$, is not preserved by conjugacy. In fact, if $f\in\text{Diff}^{~1}_{\mu, R}(\mathbb{T}^2)$ is conjugate to $g\in\text{Diff}^{~1}_{\mu}(\mathbb{T}^2)$ through a homeomorphism $h$, then, although we have
$$(R\circ h)\circ g=f^{-1}(R\circ h),$$
the map $g$ may be not $R$-reversible. Nevertheless, the last equation indicates that
$$(h^{-1}\circ R\circ h)\circ g=g^{-1}\circ (h^{-1}\circ R \circ h),$$
so a diffeomorphism conjugate to a $R$-reversible linear Anosov diffeomorphism is reversible as well, although with respect to another involution, which is conjugate to $R$ but may be either non-linear or even non-differentiable.

\section{Linear involutions on $\mathbb{T}^2$}\label{se.involutions}

We start characterizing the linear involutions $R:\mathbb{T}^2\rightarrow \mathbb{T}^2$ of the torus, induced by matrices $A$ in $\SL(2,\mathbb{Z})$. Given this, we will be able to describe the set of fixed points of such involutions.

\subsection{Classification} After differentiating the equality $R^2=Id_{\mathbb{T}^2}$ at any point of $\mathbb{T}^2$, we obtain $A^2=Id_{\mathbb{R}^2}$. Comparing the entries of the matrices in this equality, we conclude that:

\begin{lemma}\label{prop.classification}
If $A\colon \mathbb{R}^2\rightarrow \mathbb{R}^2$ is linear involution of $\SL(2,\mathbb{Z})\setminus \{\pm Id\}$, then it belongs to the following list:\\
\begin{itemize}
\item $A=\begin{pmatrix}\pm1 & 0 \\ \gamma & \mp1 \end{pmatrix}$ \hspace{1cm} or its transpose, for some $\gamma\in\mathbb{Z}$.
\item $A=\begin{pmatrix} \alpha & \beta \\ \frac{1-\alpha^2}{\beta} & -\alpha \end{pmatrix}$ \hspace{0.8cm} for $\alpha,\beta\in\mathbb{Z}\backslash\{0\}$ such that $\beta$ divides $1-\alpha^2$.
\end{itemize}
\end{lemma}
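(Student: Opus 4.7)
The plan is to set $A = \begin{pmatrix} a & b \\ c & d \end{pmatrix}$ with integer entries and expand the involution identity $A^2 = \mathrm{Id}$ directly. Comparing matrix entries yields the system
$$a^2 + bc = 1, \qquad d^2 + bc = 1, \qquad b(a+d) = 0, \qquad c(a+d) = 0.$$
I would note at the outset that the $\SL(2,\mathbb{Z})$ hypothesis (determinant $\pm 1$) is automatic here, since $A^2 = \mathrm{Id}$ forces $(\det A)^2 = 1$; the classification is therefore driven entirely by the involution equation together with the integrality of the entries.

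Subtracting the first two equations gives $a^2 = d^2$, and the last two say that either $a+d = 0$ or $b = c = 0$. The possibility $b = c = 0$ combined with $a^2 = 1$ yields only $A = \pm \mathrm{Id}$, which is excluded by hypothesis; so I would reduce to the branch $d = -a$, in which case the remaining constraint collapses to $bc = 1 - a^2$.

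At this point I would split on $|a|$. If $a = \pm 1$, then $bc = 0$; the subcase $b = 0$ (with $c = \gamma \in \mathbb{Z}$ arbitrary) produces the matrices $\begin{pmatrix} \pm 1 & 0 \\ \gamma & \mp 1 \end{pmatrix}$, and the subcase $c = 0$ produces their transposes. If $|a| \geq 2$, then $1 - a^2 \neq 0$ forces both $b$ and $c$ to be nonzero integers, and writing $\alpha = a$, $\beta = b$, the integrality of $c = (1 - \alpha^2)/\beta$ is exactly the divisibility condition $\beta \mid (1 - \alpha^2)$, yielding the third family.

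There is no serious obstacle: the argument is a short finite case analysis of the integer solutions of $A^2 = \mathrm{Id}$. The only mild subtlety is keeping the first family and its transpose listed separately, since these two sub-cases arise from different vanishings among the off-diagonal entries and together account for all involutions with $a \in \{\pm 1\}$, while the third family accounts for the remaining ones.
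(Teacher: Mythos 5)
Your overall approach---expanding $A^2=\mathrm{Id}$ entrywise and running a finite case analysis over the integer solutions---is essentially the paper's; the only structural difference is that you organize the cases by the value of the diagonal entry $a$, whereas the paper splits on which off-diagonal entries vanish. However, your case split has a concrete hole. After reducing to $d=-a$ and $bc=1-a^2$, you treat ``$a=\pm1$'' and ``$|a|\ge 2$'' but omit $a=0$. In that case $bc=1$, so $b=c=\pm1$ and $A=\begin{pmatrix}0&1\\1&0\end{pmatrix}$ or $A=\begin{pmatrix}0&-1\\-1&0\end{pmatrix}$. These are involutions in $\SL(2,\mathbb{Z})\setminus\{\pm Id\}$ (determinant $-1$), they are not of the first listed form or its transpose, and they match the third family's formula only with $\alpha=0$, which the statement explicitly forbids. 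So the missing case is not vacuous: it produces matrices that the lemma, as literally stated, does not list. (The paper's own proof has the mirror-image of this issue: its third case, $\beta\neq0$ and $\gamma\neq0$, derives the form $\begin{pmatrix}\alpha&\beta\\ (1-\alpha^2)/\beta&-\alpha\end{pmatrix}$ without ever using $\alpha\neq0$, so the hypothesis $\alpha\in\mathbb{Z}\setminus\{0\}$ in the statement is not actually forced there either.) To close your argument you must handle $a=0$ explicitly, and either admit $\alpha=0$ into the third family or record the two swap-type matrices as a separate item.

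A second, smaller imprecision: you assert that $b=c=0$ together with $a^2=1$ yields only $A=\pm\mathrm{Id}$. That is false as stated---$\begin{pmatrix}1&0\\0&-1\end{pmatrix}$ also has $b=c=0$---but it is harmless for the logic, because any such matrix with $a+d\neq0$ is indeed $\pm\mathrm{Id}$, and the ones with $a+d=0$ are recaptured in your $d=-a$ branch as the $\gamma=0$ instances of the first family. Rephrase the exclusion so it claims only what it needs: if $a+d\neq0$, then $b=c=0$ and $a=d=\pm1$, hence $A=\pm\mathrm{Id}$, which is excluded; therefore one may assume $d=-a$.
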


\begin{proof}
Let $A$ be a matrix $\begin{pmatrix}\alpha & \beta \\ \gamma & \delta \end{pmatrix} \in \SL(2,\mathbb{Z})$ such that $A^2=Id$. This means that
$$\alpha \delta - \beta \gamma = \pm 1$$
and
\begin{equation*}
\left\{
\begin{array}{l}
\alpha^2+\beta\gamma=1\\
\beta(\alpha + \delta) = 0\\
\gamma(\alpha+\delta)=0\\
\gamma\beta+\delta^2=1
\end{array}
\right.
\end{equation*}
which implies that
\begin{equation*}
\left\{
\begin{array}{l}
\beta=0 \,\vee\, \alpha=-\delta\\
\gamma=0 \,\vee\, \alpha=-\delta.
\end{array}
\right.
\end{equation*}

\medskip

\noindent \textbf{1st case:} $\beta=0$

\medskip

One must have $\alpha=\pm 1$ and $\delta=\pm 1$. If $\alpha=\delta=1$ or $\alpha=\delta=-1$, we conclude that $\gamma=0$ and so $A=\pm Id$. Therefore, $-\alpha=\delta=1$ or $\alpha=-\delta=1$, and there are no restrictions on the value of $\gamma$. Hence
$$A = \begin{pmatrix}\pm1 & 0 \\ \gamma & \mp1 \end{pmatrix}\quad \text{ for } \, \gamma\in\mathbb{Z}.$$

\medskip

\noindent \textbf{2nd case:} $\gamma=0$

\medskip

Again $\alpha=\pm 1$ and $\delta=\pm 1$, and so either $-\alpha=\delta=1$ or $\alpha=-\delta=1$. Therefore $A = \begin{pmatrix}\pm 1 & \beta \\ 0 & \mp1 \end{pmatrix}$, $\beta\in\mathbb{Z}.$

\medskip

\noindent \textbf{3rd case:} $\beta \neq 0$ and $\gamma \neq 0$

\medskip

We must have $\alpha=-\delta$ and so $\alpha^2+\beta\gamma=1$ is equivalent to $\gamma=\frac{1-\alpha^2}{\beta}$. Moreover, as $A$ belongs to $\SL(2,\mathbb{Z})$, the entry $\beta$ must divide $1-\alpha^2$. Thus
$$A = \begin{pmatrix}\alpha & \beta \\ \frac{1-\alpha^2}{\beta} & -\alpha \end{pmatrix}.$$
\end{proof}

\subsection{Fixed points}\label{se.fixed points}

From the previous description of the matrices $A$ and by solving the equation $A(x,y)=(x,y)$ in $\mathbb{R}^2$, we deduce that the fixed point set of a linear non-trivial involution $R$ of the torus is a finite union of smooth closed curves obtained by projecting subspaces of $\mathbb{R}^2$ with dimension one.

Firstly, such an $R$ is induced by a matrix $A \in \SL(2,\mathbb{Z})$ as specified by Lemma~\ref{prop.classification}, so we get

\bigskip

\begin{center}
\small{\begin{tabular}{|c|c|}
	\hline
$A$ & Fixed point subspace of $A$ \\
	\hline\hline
$\begin{pmatrix} 1 & 0 \\ \gamma & - 1 \end{pmatrix}$ & $y=\frac{\gamma}{2}x$  \\
	\hline
$\begin{pmatrix} 1 & \gamma \\ 0 & -1 \end{pmatrix}$ & $y=0$  \\
	\hline
$\begin{pmatrix} -1 & 0 \\ \gamma & 1 \end{pmatrix}$ & $x=0$  \\
	\hline
$\begin{pmatrix} -1 & \gamma \\ 0 & 1 \end{pmatrix}$ & $x=\frac{\gamma}{2}y$  \\
	\hline
$\begin{pmatrix} \alpha & \beta \\ \frac{1-\alpha^2}{\beta} & -\alpha \end{pmatrix}, \,\,\beta \neq 0$ & $y=\frac{1-\alpha}{\beta}x $  \\
\hline
\end{tabular}}
\end{center}

\bigskip

\noindent Moreover, if $\pi:\mathbb{R}^2 \rightarrow \mathbb{T}^2=\mathbb{R}^2/\mathbb{Z}^2$ denotes the usual projection, then
$$A(x,y)=(x,y) \Rightarrow R(\pi(x,y))=\pi(x,y),$$
so the points of the projections on the torus of the subspaces of the previous table are fixed points by $R$. As the slopes of these lines are rational numbers, these projections are closed curves on $\mathbb{T}^2$.

Conversely, observe that $R(\pi(x,y))=\pi(x,y)$ if and only if $\pi(A(x,y))=\pi(x,y)$, and this implies that there exists $(n,m)\in \mathbb{Z}^2$ such that $A(x,y)=(x,y)+(n,m)$. So, for instance, if $A=\begin{pmatrix} 1 & 0 \\ \gamma & - 1 \end{pmatrix}$, then
$$A(x,y)=(x,y)+(n,m) \Leftrightarrow x \in \mathbb{R}, n= 0 \text{ and } y=\frac{\gamma}{2}\,x -\frac{m}{2}$$
and therefore, for the involution $R$ induced by such a matrix $A$, we have
$$Fix(R)=\pi\left(\left\{(x,\frac{\gamma}{2}\,x): x \in \mathbb{R}\right\}\right)\cup \pi\left(\left\{(x,\frac{\gamma}{2}\,x -\frac{1}{2}): x \in \mathbb{R}\right\}\right).$$
This is the union of two closed curves on $\mathbb{T}^2$. The other cases are analogous.

\section{Answer to question $Q_1$}\label{se.proof-main-teo}

Recall that, if a linear Anosov diffeomorphism $f$ is induced by a matrix $L(x,y)=(ax+by,cx+dy)$ of $\SL(2,\mathbb{Z})$, then the entries of $L$ must satisfy the conditions:\\

\begin{itemize}
\item[(IL)] (Integer lattice invariance) $\,\,a,b,c,d\in\mathbb{Z}$ and $ad-bc=\pm1$.
\item[(H1)] (Hyperbolicity) If $ad-bc=1$, then $\,\,(a+d)^2-4>0$.
\item[(H2)] (Hyperbolicity) If $ad-bc=-1$, then $\,\,(a+d)^2+4$ is not a perfect square.
\end{itemize}

\medskip

These requirements explain why a linear Anosov diffeomorphism is never $\pm Id$--reversible. Indeed, if $R=Id$ and $f$ is $R$-reversible, then $f^2=Id$; however, this equality does not hold among Anosov diffeomorphisms, whose periodic points are hyperbolic and so isolated. If $R=-Id$ and $f$ is induced by a matrix $L\in \SL(2,\mathbb{Z})$ and is $R$-reversible, then the equality $(-Id)\circ L=L^{-1}\circ (-Id)$ yields
\begin{equation*}
\left\{
\begin{array}{ll}
a=d,\,\, b=c=0, & \mbox{if $\,\,ad-bc=1$}\\
a=-d, & \mbox{if $\,\,ad-bc=-1$}\\
\end{array}
\right.
\end{equation*}
which contradicts one of the properties (H1) or (H2).\\

Going through the available matrices $A$, given by Lemma~\ref{prop.classification}, we will determine, for each $R$, an orientation-preserving $R$-reversible and linear Anosov diffeomorphism $f$.\\

\noindent \textbf{1.} $A=\begin{pmatrix} 1 & 0 \\ \gamma & - 1 \end{pmatrix}$

\medskip
	
We start noticing that the equality $A\circ L=L^{-1}\circ A$, with $det\,(L)=1$, is equivalent to $b\gamma=d-a$.

If $\gamma=0$, we may take $L=\begin{pmatrix} a & b \\ c & a \end{pmatrix}$ with integer entries such that $a^2-bc=1$ and $4a^2>4$ (so $b \neq 0$ and $c \neq 0$). For instance, $a=d=3$, $b=4$ and $c=2$.

If $\gamma \neq 0$, it must divide $d-a$ and $L$ has to be $\begin{pmatrix}a & \frac{d-a}{\gamma} \\ c & d \end{pmatrix}$, with integer entries such that $ad-bc=1$, $(a+d)^2>4$, $d-a \neq 0$ and $c\neq 0$. For example, $a=\gamma \in \mathbb{Z}\backslash \{0\}$, $b=1$, $c=2\gamma^2-1$ and $d=2\gamma$.

As the reversibility condition $A\circ L=L^{-1}\circ A$, with $det\,(L)=1$, is equivalent to $A^T\circ L^T=(L^T)^{-1}\circ A^T$, with $det\,(L^T)=1$, the case of the transpose matrix is equally solved.\\

\noindent \textbf{2.} $A=\begin{pmatrix}-1 & 0 \\ \gamma & 1 \end{pmatrix}$

\medskip

As in the previous case, the equality $A\circ L=L^{-1}\circ A$, with $det\,(L)=1$, is equivalent to $b\gamma=a-d$. So, if $\gamma=0$, we may take $L=\begin{pmatrix}3 & 4 \\ 2 & 3 \end{pmatrix}$. If $\gamma \neq 0$, we may choose, for instance, $a=\gamma \in \mathbb{Z}\backslash \{0\}$, $b=-1$, $c=1-2\gamma^2$ and $d=2\gamma$. Again, for $A^T= \begin{pmatrix}-1 & \beta \\ 0 & 1 \end{pmatrix}$, we may just pick the Anosov diffeomorphism induced by $L^T$.

\medskip

\noindent \textbf{3.} $A=\begin{pmatrix} \alpha & \beta \\ \frac{1-\alpha^2}{\beta} & -\alpha \end{pmatrix}$, with $\alpha, \beta, 1-\alpha^2 \neq 0$ and $\beta$ a divisor of $1-\alpha^2$

\medskip

The equality
$$\begin{pmatrix} \alpha & \beta \\ \frac{1-\alpha^2}{\beta} &  -\alpha \end{pmatrix} \begin{pmatrix}a & b \\ c & d \end{pmatrix}=\begin{pmatrix}d & -b \\ -c & a \end{pmatrix}\begin{pmatrix}\alpha & \beta \\ \frac{1-\alpha^2}{\beta} &  -\alpha \end{pmatrix}$$
is equivalent to the equation
$$\alpha a +\beta c= \alpha d - \frac{b}{\beta}(1-\alpha^2)$$
that is,
\begin{equation}\label{equation3}
\alpha\beta a +(1-\alpha^2) b +\beta^2 c -\alpha \beta d=0.
\end{equation}

\medskip
\noindent To easy our task, we may try to find a matrix satisfying $a=d$. Under this assumption, equation (\ref{equation3}) becomes
$$(1-\alpha^2) b +\beta^2 c=0.$$
As $c$ must also comply with the equality $a^2-bc=1$ and $b$ cannot be zero, we must have $c=\frac{a^2-1}{b}$. In addition, we know that $\beta$ divides $1-\alpha^2$ and that $\alpha^2\neq 1$, so $4\alpha^2 -4 > 0$. Therefore, a convenient choice is $a=d=\alpha$, $b=\pm\beta$ and $c=\frac{\alpha^2-1}{\pm\beta}$. Namely,
$$L=\begin{pmatrix} \alpha & \beta \\ \frac{\alpha^2-1}{\beta} & \alpha \end{pmatrix} \,\,\,\,\text{ or }\,\,\,\, L=\begin{pmatrix} \alpha & -\beta \\ \frac{1-\alpha^2}{\beta} & \alpha \end{pmatrix}.$$
This ends the proof of Theorem~\ref{teo.reversing-linear-Anosov}.

\section{Answer to question $Q_2$}\label{se.Anosov}

Let $f$ be a linear Anosov diffeomorphism, induced by a matrix $L=\begin{pmatrix} a & b \\ c & d \end{pmatrix}$ of $\SL(2,\mathbb{Z}).$

\subsection{Orientation-preserving case} Assume that $ad-bc=1$ and take a linear involution $R$, given by the projection on the torus of a matrix $A$ as described by Lemma~\ref{prop.classification}.\\

\noindent \textbf{Case 1}: $A=\begin{pmatrix} 1 & 0 \\ \gamma & -1 \end{pmatrix}$ or $A=\begin{pmatrix} -1 & 0 \\ \gamma & 1 \end{pmatrix}$.

\medskip

The reversibility equality is equivalent to $b\gamma = d-a$ or $b\gamma = a-d$. So there is such an involution $A$ if and only if $b$ divides $d-a$, in which case only one valid $\gamma$ exists (namely, $\gamma=\frac{d-a}{b}$ or $\gamma=\frac{a-d}{b}$, respectively).\\

\noindent \textbf{Case 2}: $A=\begin{pmatrix} 1 & \gamma \\ 0 & -1 \end{pmatrix}$ or $A=\begin{pmatrix} -1 & \gamma \\ 0 & 1 \end{pmatrix}$.

\medskip

Dually, the reversibility condition is equivalent to $c\gamma = d-a$ or $c\gamma = a-d$. So there is such an involution $A$ if and only if $c$ divides $d-a$, and then we get a unique value for $\gamma$.\\

\noindent \textbf{Case 3}: $A=\begin{pmatrix} \alpha & \beta \\ \frac{1-\alpha^2}{\beta} & -\alpha \end{pmatrix}$, where $\alpha, \beta, 1-\alpha^2 \neq 0$ and $\beta$ divides $1-\alpha^2$.

\medskip

The pairs $(\alpha,\beta)\in\mathbb{Z}^2$ for which $f$ is $R$-reversible are the integer solutions of the equation, in the variables $\alpha$ and $\beta$, given by
$$\alpha a + \beta c = \alpha d - \frac{b}{\beta}(1-\alpha^2)$$
that is,
$$b\alpha^2 + \alpha \beta (d-a) - \beta^2 c = b.$$
This quadratic form defines a conic whose kind depends uniquely on the sign of
$$\Delta=(d-a)^2 + 4bc = (a+d)^2 - 4$$
which we know to be always positive. So the conic is a hyperbola. After the change of variables
$$x=2b\alpha+(d-a)\beta \,\,\,\,\,\, \text{ and } \,\,\,\,\,\,y=\beta$$
the equation of the conic becomes
$$x^2-Dy^2=N$$
where $D=\Delta=(a+d)^2-4>0$ and $N=4b^2$. Thus the problem of finding the intersections of the conic with the integer lattice is linked to the solutions of this generalized Pell equation (and we need solutions with $y \neq 0$). According to \cite{B,C,Mo}, this sort of Pell equation has zero integer solutions or infinitely many, and there are several efficient algorithms to determine which one holds in each particular case. However, notice that, if they exist, the solutions we are interested in have also to fulfill the other requirements, namely $\alpha, \beta \neq 0$ and $\beta$ divides $1-\alpha^2$.

\subsection{Example 1} It is time to test the previous information in a few examples.\\

\begin{center}
\small{\begin{tabular}{|c|c|c|c|c|c|}
	\hline
 \textbf{\emph{Anosov}} & & & \textbf{\emph{Involutions}} & &  \\
\hline\hline
& $\begin{pmatrix} 1 & 0 \\ \gamma & -1 \end{pmatrix}$ & $\begin{pmatrix} 1 & \gamma \\ 0 & -1 \end{pmatrix}$ & $\begin{pmatrix} -1 & 0 \\ \gamma & 1 \end{pmatrix}$ &  $\begin{pmatrix} -1 & \gamma \\ 0 & 1 \end{pmatrix}$ & $\begin{pmatrix} \alpha & \beta \\ \frac{1-\alpha^2}{\beta} & -\alpha \end{pmatrix}$ \\
\hline\hline
$\begin{pmatrix} 2 & 1 \\ 3 & 2 \end{pmatrix}$ & $\gamma=0$ & $\gamma=0$ & $\gamma=0$ & $\gamma=0$ & For instance, $\,\begin{pmatrix} 2 & 1 \\ -3 & -2 \end{pmatrix}$ \\
\hline
$\begin{pmatrix} 2 & 1 \\ 1 & 1 \end{pmatrix}$ & $\gamma=-1$ & $\gamma=-1$ & $\gamma=1$ & $\gamma=1$ & For instance, $\,\begin{pmatrix} 5 & 3 \\ -8 & -5 \end{pmatrix}$ \\
\hline
$ \begin{pmatrix} 4 & 9 \\ 7 & 16 \end{pmatrix}$ & $-$ & $-$ & $-$ & $-$ & $-$ \\
	\hline
\end{tabular}}
\end{center}

\medskip

Let us follow in detail the entries of this table. When $L=\begin{pmatrix} 2 & 1 \\ 3 & 2 \end{pmatrix}$, the generalized Pell equation is $x^2-12y^2=4$ and there are infini\-tely many matrices $A$ of the third kind which correspond to linear involutions $R$ such that $f$ is $R$-reversible. Similarly, for $L=\begin{pmatrix} 2 & 1 \\ 1 & 1 \end{pmatrix}$ the generalized Pell equation is $x^2-5y^2=4$ and there are infinitely many involutions of type $3$. The third example in this table has no linear involutions, although its Pell equation $x^2-396y^2=324$ has infinitely many solutions.

\begin{remark}
Notice that, if $R$ is an involution such that $R \circ f= f^{-1} \circ R$, then, for each $n\in \mathbb{Z}$, the diffeomorphism $R \circ f^n$ is also an involution, since
$$(R \circ f^n)^2 = (R \circ f^{n}) \circ (f^{-n}\circ R) = Id$$
and $f$ is $R \circ f^n$-reversible due to the equality
$$(R \circ f^n)\circ f= (R \circ f) \circ f^n=(f^{-1}\circ R) \circ f^n = f^{-1}\circ (R \circ f^n).$$
Therefore, once such an involution $R$ is found for an Anosov diffeomorphism $f$, then we have infinitely many involutions with respect to which $f$ is reversible. This is so because, as no non-trivial power of an Anosov diffeomorphism is equal to the Identity, we have $R\circ f^k \neq R\circ f^m$ for every $k\neq m \in \mathbb{Z}$.
\end{remark}

\subsection{Orientation-reversing case}\label{orientation-reversing}

Consider now a linear Anosov diffeomorphism $f$, induced by a matrix $L=\begin{pmatrix}a & b \\ c & d \end{pmatrix}$ of $\SL(2,\mathbb{Z})$ such that $ad-bc=-1$. The previous analysis extends to this setting with similar conclusions.\\

\noindent \textbf{Cases 1,\,2}: There is no valid $A$, since the reversibility condition $A\circ L=L^{-1}\circ A$ demands that $b$ and $a+d$ are zero, and these values are forbidden in the hyperbolic context.

\medskip

\noindent \textbf{Case 3}: $A=\begin{pmatrix} \alpha & \beta \\ \frac{1-\alpha^2}{\beta} & -\alpha \end{pmatrix}$, where $\alpha, \beta, 1-\alpha^2 \neq 0$ and $\beta$ divides $1-\alpha^2$.

\medskip

The pairs $(\alpha,\beta)\in\mathbb{Z}^2$ for which $f$ is $R$-reversible form the set of integer solutions of the equations, in the variables $\alpha$ and $\beta$, given by
\begin{eqnarray*}
\alpha b + \beta d &=& 0 \\
\alpha c - \frac{a}{\beta}(1-\alpha^2)&=&0 \\
\alpha a + \beta c &=& -\alpha d + \frac{b}{\beta}(1-\alpha^2).
\end{eqnarray*}
The last equality describes a (possibly degenerate) conic
$$b\alpha^2 + \alpha \beta (a+d)+ \beta^2 c = b$$
whose type is determined by the sign of
$$\Delta=(a+d)^2 -4bc = (a-d)^2 - 4.$$
For instance, $\Delta <0$ for $L=\begin{pmatrix} 2 & 3 \\ 1 & 1 \end{pmatrix}$; $\Delta = 0$ when $L=\begin{pmatrix}3 & 4 \\ 1 & 1 \end{pmatrix}$; and $\Delta >0$ if $L=\begin{pmatrix} 4 & 5 \\ 1 & 1 \end{pmatrix}$.

Once again, the problem of finding the points of this conic in the integer lattice is linked to the existence of solutions of the generalized Pell equation $x^2-Dy^2=N$, where $D=\Delta=(a-d)^2-4$ and $N=4b^2$, satisfying
\begin{eqnarray*}
x&=&2b\alpha+(a+d)\beta \\
y&=&\beta
\end{eqnarray*}
under the other constraints, namely
$$\left\{ \begin{array}{c}
\alpha, \,\beta,\, 1-\alpha^2 \neq 0 \\
\beta\, \text{ divides } \,1-\alpha^2 \\
\alpha b + \beta d = 0 \\
\alpha c - \frac{a}{\beta}(1-\alpha^2)=0.
\end{array}
\right.$$

\medskip

\subsection{Example 2} Let us examine some orientation-reversing examples.\\
\begin{center}
\small{\begin{tabular}{|c|c|c|c|c|c|}
	\hline
 \textbf{\emph{Anosov}} & \textbf{$\Delta$}  &\emph{\textbf{Pell Equation}} & \textbf{\emph{Number of solutions}} & \textbf{\emph{Conic}} &\textbf{\emph{Involutions}}  \\
\hline\hline
$\begin{pmatrix} 2 & 3 \\ 1 & 1 \end{pmatrix}$ & $-3$ & $x^2+3y^2=36$ & $6$ & Ellipse & $-$  \\
\hline
$\begin{pmatrix} 3 & 4 \\ 1 & 1 \end{pmatrix}$ & $0$ & $x^2=64$ & $\infty$ & Two vertical lines & $-$ \\
\hline
$\begin{pmatrix} 4 & 5 \\ 1 & 1 \end{pmatrix}$ & $5$ & $x^2-5y^2=100$ & $\infty$ & Hyperbola & $-$ \\
	\hline
\end{tabular}}
\end{center}

\bigskip

This short sample of examples seems nonetheless to hint that, no matter the amount of solutions the corresponding generalized Pell equation possesses, a linear orientation-reversing Anosov diffeomeorphism is not reversible with respect to linear involutions of the third sort (that is, projections of matrices
$$A=\begin{pmatrix} \alpha & \beta \\ \frac{1-\alpha^2}{\beta} & -\alpha \end{pmatrix} \,\,\in\, \SL(2,\mathbb{Z})$$
with $\alpha, \beta \neq 0$). And this is precisely the case, as we will now verify.\\

Consider one such a matrix $A$ and let us go back to the three conditions arising from reversibility in this setting:\\
$$\left\{ \begin{array}{c}
\alpha b + \beta d = 0 \\
\alpha \beta c - a(1-\alpha^2)=0 \\
b\alpha^2 + \alpha \beta (a+d)+ \beta^2 c = b.
\end{array}
\right.$$

\medskip

\noindent Replacing on the third equality $\alpha b$ by $-\beta d$, we get
$$\alpha \beta a +\beta^2 c = b.$$
Then, multiplying this equation by $\alpha$, which is nonzero, and turning $\alpha \beta c$ into $a(1-\alpha^2)$, we arrive at
$$\beta a = \alpha b.$$
This, joined to $\alpha b =- \beta d$, yields
$$\beta(a+d)=0.$$
As $\beta \neq 0$, we must have $a+d=0$, a value banned by the hyperbolicity of $f$.

\section{Answer to question $Q_3$}\label{se.generic}

Given an area-preserving diffeomorphim $f$, if $f^2=Id$, then $f^n$ belongs to $\mathcal{Z}_r(f)$ for all $n \in \mathbb{Z}$; and conversely. However, a generic $f \in \text{Diff}^{~1}_{\mu}(M)$ does not satisfy the equality $f^n=Id$, for any integer $n\neq 0$. Indeed, the Kupka-Smale Theorem for area-preserving diffeomorphisms \cite{Devaney76} asserts that, given $k\in \mathbb{N}$, generically in the $C^1$ topology, the periodic orbits of period less or equal to $k$ are isolated.

The existence of a hyperbolic toral diffeomorphism, which is structurally stable, without reversible symmetries shows that there are non-empty open subsets of $\text{Diff}^{~1}_{\mu}(M)$ where reversibility is absent. Moreover, if $R\neq S$ are in $\mathcal{Z}_r(f)$, then $R \circ S$ belongs to the centralizer of $f$, due to the equalities
$$(R \circ S) \circ f = R \circ (S \circ f) = R \circ (f^{-1} \circ S) = f \circ (R \circ S).$$
Now, according to \cite{BCW}, for a $C^1$-generic $f \in \text{Diff}^{~1}_{\mu}(M)$, the centralizer of $f$ is trivial, meaning that it reduces to the powers of $f$. Therefore, there must exist $n \in \mathbb{Z}$ such that $S=R\circ f^n$. Thus, if $\mathcal{Z}_r(f)\neq \emptyset$, then its elements are obtained from the composition of one of them with the powers of $f$. And so, $C^1$-generically, $\mathcal{Z}_r(f)$ is trivial.\\

\small
\section*{Acknowledgements}
 M.~Bessa and A.~Rodrigues thank Ant\'onio Machiavelo for various useful discussions. M.~ Bessa was partially supported by National Funds through FCT (Funda\c{c}\~{a}o para a Ci\^{e}ncia e a Tecnologia) project PEst-OE/MAT/UI0212/\-2011. M.~Carvalho and A.~Rodrigues were funded by CMUP (UID/MAT/00144/2013), which is financially supported by FCT (Portugal) with national (MEC) and European structural funds through the programs FEDER, under the partnership agreement PT2020. A.~Rodrigues has also benefited from the FCT grant SFRH/BPD/84709/2012.\\

\end{document}